\newtheorem{theorem}{Theorem}
\newtheorem{lemma}[theorem]{Lemma}
\begin{document}

\title{Uniform van Lambalgen's theorem fails for computable randomness}
\author{Bruno Bauwens\footnote{
\noindent
\textbf{AMS classification}: 03D32, 68Q30. 
\newline \textbf{Keywords}: Computable randomness, martingales, van Lambalgen's theorem.
\newline \textbf{Address}: National Research University Higher School of Economics,
Faculty of Computer Science,
11 Pokrovsky Boulevard, Kab S833, Moscow 109028, Russia.
%\newline \textbf{Email}: brbauwens -at- gmail.com
  } 
}

\date{}

\vspace{-3em}
\maketitle  
\vspace{-1cm}

\begin{abstract}
  \noindent
  We show that there exists a bitsequence that is not computably random for which the odd bits are computably random 
  and the even bits are computably random relative to the odd bits. 
\end{abstract}

%Downey: wrong theorem: 7.1.13
\noindent
Imagine two referees need to decide whether a bitsequence is random.
The first referee inspects the odd bits. The second referee
inspects the even bits conditional to the odd ones.  Both referees claim that
their bits are random. Is it still possible that the full sequence is
non-random? We investigate this question for computable randomness. 
%one of the most natural definitions of randomness. 

\smallskip

A  {\em martingale} $f$ is a function mapping strings to non-negative reals such that 
$f(x) = (f(x0) + f(x1))/2$ for all $x$. Let $\alpha$ and $\beta$ be sequences.
$\alpha$ is {\em computably random} or  {\em CR} (respectively, {\em CR relative
to}~$\beta$) if for every totally computable martingale~$f$ (respectively,
totally computable relative to~$\beta$), the set of values of $f$ on all initial segments of
$\alpha$ is bounded.

Observe that the odd bits of a CR sequence define a sequence
that is also CR.  Suppose that the odd bits are
CR and the even bits are CR relative to the odd ones, 
is the sequence CR?
This question has been asked repeatedly~\cite{vanLambalgenDemuth,RuteSchnorr,logicBlog2014},
and we give a negative answer.
%and the following was conjectured in~\cite{RuteSchnorr}.
%
\begin{theorem}\label{th:main}
  There exists a non-CR sequence  for which the odd bits are CR
   and the even bits are CR relative to the odd bits.
\end{theorem}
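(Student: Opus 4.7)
The plan is to build $\alpha$, together with a witnessing computable martingale $M$, by a finite-injury priority argument. Write $X$ for the odd-indexed bits and $Y$ for the even-indexed bits of $\alpha$, enumerate all computable martingales $(h_e)_{e\in\omega}$ and all oracle Turing functionals $(\Phi_e)_{e\in\omega}$ producing (possibly partial) $X$-computable martingales, and try to meet the requirements
\[
R_e\colon\ \sup_n h_e(X\uh n)<\infty, \qquad
S_e\colon\ \text{if $\Phi_e^X$ is a total martingale, then }\sup_n \Phi_e^X(Y\uh n)<\infty,
\]
while keeping the capital of $M$ on $\alpha$ unbounded.

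The martingale $M$ is specified in advance. Fix a sparse computable sequence of ``active'' positions $n_0<n_1<\cdots$ chosen at $X$-bit indices of $\alpha$, and at each $n_k$ let $M$ stake some $\varepsilon_k\in(0,1)$ with $\prod_k(1+\varepsilon_k)=\infty$ on a prediction $\phi_k(\alpha\uh n_k)$ that is a computable function of the read initial segment and that depends non-trivially on the $Y$-bits encoded therein. Between active positions $M$ is trivial. In the construction we then force the $X$-bit at position $n_k$ to agree with $\phi_k$, so each active wager is won and $M$ diverges on $\alpha$. Note that an ordinary computable martingale on $X$ sees no $Y$-bit and therefore cannot replicate these wagers, which is the first hint that $X$ can still be computably random.

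The crux of the argument is the requirement $S_e$. Because $\Phi_e^X$ has oracle access to \emph{all} of $X$, it can in principle read the forced bit $\alpha_{n_k}=\phi_k(\alpha\uh n_k)$ from its oracle and invert the defining equation to predict the $Y$-bits that $\phi_k$ consumed; if we are not careful this gives $\Phi_e^X$ a winning strategy on $Y$ and destroys $S_e$. The decisive design choice is therefore to make each $\phi_k$ many-to-one in every individual $Y$-bit it consumes --- for instance, a parity- or hash-like aggregation over a long block of $Y$-bits --- so that the oracle value $\alpha_{n_k}$ combined with all but one of the consumed $Y$-bits still fails to determine the missing bit. In parallel, the priority schedule commits each $Y$-bit, at its stage of definition, to whatever value defeats the currently highest-priority pending $S_e$-opponent, \emph{before} that bit is absorbed into later $\phi_k$'s; the diagonalization against $R_e$ is performed at $X$-bits between active positions, where we are free to choose the bit adversarially against $h_e$. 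The main technical work --- and the step I expect to be hardest --- is the verification that such $\phi_k$'s exist with enough residual freedom to simultaneously accommodate the $S_e$-choices of $Y$-bits, the $R_e$-choices of $X$-bits between active positions, and the forced $X$-bits at active positions, so that after finitely many injuries every $R_e$ and $S_e$ is satisfied while $M$'s active wagers continue to compound into unbounded capital on $\alpha$.
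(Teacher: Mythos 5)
There is a genuine obstruction to your plan, and it sits exactly at the step you identify as the crux: no choice of the functions $\phi_k$ can work. Suppose, as you propose, that infinitely many odd-indexed bits are deterministically forced, $\alpha_{n_k}=\phi_k(\alpha\uh (n_k-1))$, with the positions $n_k$ and the functions $\phi_k$ computable. Let $\lambda$ be the computable measure on the interleaved sequence space under which every unforced bit is an independent fair coin and every forced bit is determined by its $\phi_k$; your $\alpha$ lies in the support of $\lambda$. Write $\lambda_X$ for the marginal of $\lambda$ on the odd part and $\lambda(x,y)$ for the $\lambda$-measure of the event that the odd part extends $x$ and the even part extends $y$; both are computable. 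Then $U(x)=2^{|x|}\lambda_X(x)$ is a computable martingale on $X$, and for each $n$ the function $C_n(y)=2^{|y|}\lambda(X\uh n,\,y)/\lambda_X(X\uh n)$ is a martingale on $Y$ computable from the oracle $X$ with $C_n(\text{empty string})=1$. A direct computation gives $U(X\uh n)\cdot C_n(Y\uh n)=2^{2n}\lambda(X\uh n, Y\uh n)=2^{K_n}$, where $K_n$ is the number of forced positions among the first $2n$ bits of $\alpha$. Hence if $X$ is computably random, so that $U\le c$ along $X$, then $C_n(Y\uh n)\ge 2^{K_n}/c$, and the single $X$-computable martingale $V=\sum_j 2^{-j}C_{m_j}$, with $m_j$ chosen computably so that $K_{m_j}\ge 2j$, is unbounded on $Y$. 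So your requirements cannot all be met: with computable deterministic forcing, either $X$ fails to be CR or $Y$ fails to be CR relative to $X$, however the $\phi_k$ are designed. This also shows why ``many-to-one'' aggregation cannot help: the oracle adversary never needs to \emph{determine} an individual $Y$-bit, it only bets each block proportionally to the conditional distribution given the revealed value $\alpha_{n_k}$, gaining exactly the factor $1/P_k$ that the plain martingale $U$ fails to gain (its factor is $2P_k$); the two multiply to $2$ per forced bit. Your parity example is in fact the worst case, since parity plus all but one bit of a block does determine the remaining bit. A separate, smaller gap: meeting $R_e$ by choosing bits adversarially against $h_e$ only at positions assigned to $R_e$ does not bound $h_e$, which may grow at everyone else's positions; one needs a weighted master martingale there as well.

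The paper evades this obstruction by reversing the direction of the coding and making the decoding non-computable. It fixes the odd part $\alpha$ to be an arbitrary CR sequence (so its randomness costs nothing) and constructs the even part $\beta$ in blocks, each block being one of two admissible strings along which a G\'acs--Ku\v{c}era-style master martingale, which multiplicatively dominates every total $\alpha$-computable martingale, stays bounded; the choice between the two admissible strings encodes a bit of $\alpha$ together with a totality fact about the next partial $\alpha$-computable martingale. Which two strings are admissible depends on $\Pi^0_2$ totality information that is itself encoded in $\beta$, so a bivariate martingale reading both sequences can bootstrap the decoding and double its capital on the encoded bits of $\alpha$, while a plain martingale on $\alpha$ cannot decode at all, and an $\alpha$-oracle martingale on $\beta$ is defeated by the master martingale. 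If you want to salvage your outline, the information must flow from the already-random sequence into the constructed one, and the decoding key must be hidden from both single-sequence adversaries; forcing bits of one sequence to be computable functions of the other is exactly what cannot work.
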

This is remarkable for three reasons. 
First, the result is positive for the closely related
notion of Schnorr randomness\footnote{
  A sequence $\alpha$ is 
  {\em Schnorr random} if there exists a computable non-decreasing unbounded function $h$ and a
  computable martingale $d$ such that $d(\alpha_1\dots \alpha_n) \ge h(n)$ for 
  infinitely many~$n$}~\cite{StephanFranklinVanLambalgen,RuteSchnorr}. 
(See the footnotes for details.)
Secondly, with a slightly stronger assumption the answer is positive: 
%$\alpha = \alpha_1\alpha_2\dots$ and $\beta = \beta_1\beta_2\dots$:
If $\alpha$ is CR  relative to $\beta$ and $\beta$ is CR relative to $\alpha$, 
then $\alpha_1\beta_1\alpha_2\beta_2\dots$ is CR.\footnote{
  Indeed, any martingale is the product of 
  a martingale that only bets on odd bits and one that bets on even bits. 
  This decomposition can happen in a computable way. The proof finishes by
  a simple transformation of these martingales to conditional martingales.
  The converse of the statement is also true for uniformly conditional randomness, 
  see further.
  }
Thirdly, it has been repeatedly claimed that a positive answer 
follows by the same argument as for Martin-L\"of randomness. 
In~\mbox{\cite[Remark 3.2]{RuteSchnorr}} it is explained why this is not true, 
and  Theorem~\ref{th:main} was conjectured.\footnote{
  Remark 3.2 considers Schnorr randomness, but is also valid for computable randomness.
  The conjecture is located in the paragraph below Theorem 1.3.
  }
  %\,\footnote{
  %Also a weaker form of theorem has been conjectured, in which the even bits 
  %are only required to be  {\em uniformly} CR relative to the odd bits. See the next footnote
  %for the definition.
  %}

\medskip
Van Lambalgen's requirement for randomness is that $\alpha_1\beta_1\alpha_2\beta_2\dots$ is random
if and only if, $\alpha$ is random and $\beta$ is random relative to $\alpha$. 
Both for computable and Schnorr randomness the forward implication is known to fail~\cite{vanLambalgenFails}.
%there exists a random $\alpha_1\beta_1\alpha_2\beta_2\dots$ 
%for which $\alpha$ is not random relative to $\beta$~\cite{vanLambalgenFails}.
However, for both notions of randomness, this forward direction holds when the uniform\footnote{
  A function $f$ is {\em uniformly} computable relative to~$\alpha$ if there exists 
  an oracle Turing machine $U$ such that $U^\alpha = f$ and $U^\beta(x)$ is defined for all~$\beta$ and~$x$. 
  A sequence is {\em uniformly random relative to $\alpha$} if no martingale that is uniformly computable 
  relative to~$\alpha$ has unbounded values on it. We refer to~\cite{timeBoundedVanLambalgen} 
  for more variants.
  }
variant of relative randomness is used~\cite{RuteSchnorr}.
From Theorem~\ref{th:main}, it follows follows that the reverse direction does not hold 
for computable randomness. In fact, this already follows from the weaker uniform variant, 
in which the even bits are only required to be uniformly random relative to the odd ones.
In conclusion: The uniform van Lambalgen's criterion holds for Schnorr randomness 
and fails for computable randomness. 
This suggests that Schnorr randomness is more fundamental, 
and this was also observed in applications 
in computable analysis and reverse mathematics~\cite{ruteTalkCCR2015}.

%Hence by the result from~\cite{StephanFranklinVanLambalgen,RuteSchnorr} mentioned above, the uniform 
%van Lambalgen's requirement holds for Schnorr randomness.
%Many researchers have wondered whether the uniform version 
%of the requirement also holds for computable randomness. 
%Our result implies a negative answer.

\section*{Proof}

We use a technique that was historically introduced to prove 
the G\'acs--Ku\v{c}era theorem~\mbox{\cite{everySequenceFromRandomOne,Kucera1985measure}}. 
It allows to construct a sequence that encodes another sequence, and for which
some martingales remain bounded, see~\cite[Lemma 8.3.1 p325]{Downey}.
This technique uses the following lemma. 

\begin{lemma}\label{lem:coding}
  For each martingale $d$, each string $x$ and each natural number $s$, there exist at least two strings $y$ 
  of length $s+2$ such that $d(xy)/d(x) < 1 + 2^{-s}$.
\end{lemma}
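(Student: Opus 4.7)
The plan is a direct pigeonhole/averaging argument on the $2^{i+2}$ extensions of $x$ of length $i+2$. The key identity, obtained by iterating the martingale equation $d(z)=(d(z0)+d(z1))/2$ a total of $i+2$ times, is
\[
   \sum_{|y|=i+2} d(xy) \;=\; 2^{i+2}\, d(x).
\]
I may assume $d(x)>0$, since otherwise $d(xy)=0$ for every extension $y$ and the inequality $d(xy)/d(x)<1+2^{-i}$ is satisfied vacuously (or under any reasonable convention) by every $y$.

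I would then argue by contradiction: suppose at most one string $y$ of length $i+2$ satisfies $d(xy)/d(x)<1+2^{-i}$. Then at least $2^{i+2}-1$ of them satisfy $d(xy)\ge (1+2^{-i})\,d(x)$, and dropping the remaining (nonnegative) term gives
\[
   2^{i+2}\,d(x) \;=\; \sum_{|y|=i+2} d(xy) \;\ge\; (2^{i+2}-1)(1+2^{-i})\,d(x).
\]
Expanding the right-hand side yields $(2^{i+2}+3-2^{-i})\,d(x)$, which strictly exceeds $2^{i+2}\,d(x)$ for every $i\ge 0$, a contradiction. Hence at least two such strings $y$ exist.

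There is essentially no real obstacle here; the only delicate point is the choice of length $i+2$ rather than $i+1$. One can check that with length $i+1$ the inequality $(2^{i+1}-1)(1+2^{-i})>2^{i+1}$ fails at $i=0$, and indeed $d(x0)=2d(x)$, $d(x1)=0$ gives a martingale with only one ``small'' child. The extra bit of depth contributes the term $+3$ in the expansion above, giving a margin that is uniform in $i$. I would make this motivating remark explicit in the write-up, but the proof itself is a two-line computation.
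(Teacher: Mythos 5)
Your proof is correct and takes essentially the same route as the paper's: both rest on the averaging identity $\sum_{|y|=i+2} d(xy) = 2^{i+2}\,d(x)$ together with a Markov-type counting step over the $2^{i+2}$ extensions. The paper phrases this as a bound on the fraction of extensions with ratio at least $1+\varepsilon$ for $\varepsilon=2^{-i}$, whereas you run the equivalent contradiction assuming at most one small extension; your explicit expansion $(2^{i+2}-1)(1+2^{-i}) = 2^{i+2}+3-2^{-i}$ makes the margin (and the need for length $i+2$ rather than $i+1$) transparent.
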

\begin{proof}
 For all $0 < \varepsilon \le 1/2$, at most a fraction $1/(1+\varepsilon)$ of strings $y$ of a given length 
 satisfy $d(xy)/d(x) \ge 1+\varepsilon$. Hence, more than a fraction $1-1/(1+\varepsilon) > \varepsilon/2$ does
 not satisfy this property. The amount of such strings of length $s+2$ is at least $ 2^{s+2} \varepsilon/2$ which equals $2$ for $\varepsilon = 2^{-s}$.
\end{proof}

\begin{proof}[Proof of  Theorem~\ref{th:main}.]
  The definition of CR-randomness does not change if we use only rational martingales~\cite[Prop. 7.1.2 p270]{Downey}.
 We choose $\alpha$ to be any computably random sequence. 
 Let $d_1, d_2, \dots$ be an enumeration of all 
 rational partial martingales that are partial computable with oracle~$\alpha$.
 Let~$\varepsilon_s = 2^{-s}$.
 $\beta$ is constructed (in a non-computable way) in stages, together with a total martingale~$d$.

 \medskip
 \noindent
 Initially, $\beta$ is the empty string and $d = 1$. 
 At each stage $s \ge 1$ we update $\beta$ and $d$:
 \begin{itemize}
   \item 
     If $d_s$ is total relative to the oracle $\alpha$ and $d(\beta)$ is positive, we replace $d$ by $d + \frac{\varepsilon_s}{d_s(\beta)}d_s$.
     Otherwise, $d$ is unchanged.
   \item 
     To $\beta$ we append either the lexicographically first or second string $y$ of length 
     $s+2$ such that 
     \begin{equation}\label{eq:martingaleIncrease}\tag{*}
       \frac{d(\beta y)}{d(\beta)} < 1 + \varepsilon_s 
     \end{equation}
     (by  Lemma~\ref{lem:coding} 
     there exist at least two such strings). 
     The choice depends on whether $d_{s+1}$ is total relative to oracle~$\alpha$.
   \item 
     To $\beta$ we again append either the lexicographically first or second string $y$ of length 
     $s+2$ such that the inequality above holds.
     The choice now depends on the value of $\alpha_t$ for some fresh value $t$. More precisely, let $t$ be the minimal value 
     that exceeds $|\beta y|$, the previous value of $t$ (for stages $s\ge 2$), 
     and all computation times and uses of the oracle $\alpha$ in the evaluations of $d(z)$
     for all strings $z$ of length at most $|\beta y|$. 
     %If $t \le |\beta y|$ or does not exceed its value
     %from the previous step, then we increase $t$ minimally such that these conditions are satisfied.
 \end{itemize}
 %For this construction we need to assume that in each stage the value of $t$ is
 %strictly above the value of $t$ from the previous stage.  This is natural,
 %because after each stage, $d$ becomes more complex and needs to be evaluated
 %on a larger set of strings. 
 %% (containing the $|\beta y|$-bit strings). 
 %For later use, we also assume that $t$ exceeds all values of $|\beta y|$ in the third step. Again this is natural, 
 %because $t$ is at least the evaluation time of $d$ on strings of length at most $|\beta y|$.
 End of construction.

 \medskip
 We show that $\beta$ is computably random relative to $\alpha$. 
 The value $d(\beta)$ after each stage $s$ is at most 
 $(1 + \varepsilon_1 + \dots + \varepsilon_s)\exp(2\varepsilon_1 + \dots + 2\varepsilon_s)$,
 and this has the finite limit $2\exp 2$. 
 Indeed, in each stage, after a possible update of $d$, the value $d(\beta)$ increases by $\varepsilon_s$, 
 and after each extension of $\beta$, the value increases by at most a factor $1+\varepsilon_s \le \exp \varepsilon_s$.

 Each test that is computable relative to~$\alpha$, appears in the sequence of
 tests. For each such test we have $d_s \le O(d)$, because at stage $s$,
 this test will be used to increment~$d$.  Hence, $d_s$ is bounded on infinitely many initial segments of $\beta$. 
 By the savings technique\footnote{
  If a martingale $f$ is unbounded on some sequence $\gamma$, then
  there exists a martingale that tends to infinity. For example: 
  $\tilde{f}(x) = \sum_k 2^{-k} f(x_k)$,
  %defines a martingale that tends to infinity on $\gamma$;
  %Here are the details. Let $f$ be a martingale. 
  where $x_k$ is equal to $x$ if no prefix $y$ of $x$ satisfies $f(y) \ge 2^{2k}$, and equal to the shortest such $y$ otherwise.
%  If $f$ is computable, then $\tilde{f}$ is computable.
%
%  converted to a computable martingale $\tilde f$ that tends to infinity on $\gamma$.
%  Indeed let $\tilde{f} = \sum_k 2^{-k} f_k$,
%  where for all $k$, the martingale $f_k$ is defined by induction for $b \in \{0,1\}$ by
%  \[
%   f_k(xb) = 
%   \begin{cases}
%     f_k(x)  &\text{ if } f_k(x) \ge 2^{2k} \\
%     f(xb) &\text{ otherwise.}
%   \end{cases}
%  \]
 } 
 this implies that no computable martingale is unbounded on $\beta$, i.e., 
 $\beta$ is computably random relative to~$\alpha$.

 \medskip
 It remains to show that the pair $\alpha_1\beta_1\alpha_2\beta_2\dots$ is not computably random.
 We construct a martingale $e$ that plays only on the bits 
 $\alpha_t$ that were used in the 3rd step of each stage in the construction, and on these bits the capital is doubled.
 In the evaluation of $e(a_1b_1\dots a_{n-1}b_{n-1}a_n)$, we pretend that $a$ and $b$ are initial segments of $\alpha$ and $\beta$, 
 and try to rerun the construction above.
 In this way, we hope to find the positions~$t$ corresponding to the bits~$\alpha_t$.

 For this, we need to know the function $d$ used in each stage. 
 %and need to evaluate it on all strings of some lengths.
 And for this, we need to know which functions among $d_1, d_2, \dots$ are total. 
 The key observation is, that knowing a prefix of $\beta$ we can decide the totality of the functions $d_1, d_2, \dots$
 Indeed, knowing the updated function $d$ in a stage $s$, 
 we can compute the lexicographically first and second string for which inequality~\eqref{eq:martingaleIncrease} holds,
 then observe which one is equal to the corresponding segment of $\beta$, and from this we know the 
 totality of $d_{s+1}$. This allows us to update $d$ in the next stage, and we can repeat this procedure. 
 By choice of~$t$, we are able to compute each index $t$ using the prefix $\alpha_1\beta_1\dots \alpha_{t-1}\beta_{t-1}$, 
 and from the 3rd stage of the construction, we obtain~$\alpha_t$.

 \medskip
 {\em Detailed construction of $e$.} Let $e(\text{empty string}) = 1$. 
 For $n$-bit $a$ and $b$, the values of $e(a_1b_1\dots a_n)$ and $e(a_1b_1\dots a_nb_n)$ 
 are equal and are defined by induction on $n$.
 %To evaluate $e(a,b)$, we assume that $b$ is long enough to go through the procedure below.\footnote{
 % It suffices that $|b| \ge 2\sum_{i=0}^{|a|} (i+2)$.
 % Indeed, the number of stages that need to be evaluated is at most $|a|$ because the sequence of values of $t$ 
 % is strictly increasing. 
 %}
 %If this is not the case, we compute the value by averaging over all long enough extensions of $b$.
 Let $a$ be of length $n$ and $b$ of length $n-1$.
 The value of $e(a_1b_1\dots a_{n-1}b_{n-1}a_n)$ is defined in stages.
 We start with $s=1$ and $d = 1$.

 At stage $s$ we evaluate the current function $d$ on all strings 
 of length at most $2\sum_{i=1}^s (i+2)$. Let $t$ be the minimal value that exceeds this length, the value of $t$ in the previous stage (if~$s \ge 2$) and 
 all the computation times and oracle uses in these evaluations of $d$.
 Note that~$t$ can be infinite if one of the guesses for the totality of $d_1, \dots, d_s$ was wrong. 
 We only need to discriminate between the following cases:
 \begin{itemize}
   \item Case $t < |a|$. 
     We check whether the appropriate segment of length $s+2$ of $b$ is indeed the lexicographically 
     first or second string $y$ satisfying~\eqref{eq:martingaleIncrease}. ($b$ is long enough because $|\beta y| < t \le |a| = |b|+1$.)
     If this is not true, we set $e(a_1b_1\dots b_{n-1}a_n) = e(a_1b_1\dots b_{n-1})$, (the value of $e$ does not matter here);
     otherwise, we continue the simulation and proceed to stage $s+1$ 
     where the function $d$ is updated according to the suggested totality of~$d_{s+1}$.
   \item Case $t > |a|$. We set $e(a_1b_1\dots b_{n-1}a_n) = e(a_1b_1\dots b_{n-1})$. 
   \item Otherwise we have $t = |a|$. 
     Let $i$ be the bit that is encoded in the 3rd step (which equals~$\alpha_t$ for correct inputs). 
     Let
     \[
       e(a_1b_1\dots b_{n-1}a_n) = 
       \begin{cases}
	 2e(a_1b_1\dots b_{n-1}) & \text{if $a_n = i$}
	 \\0 & \text{otherwise.}
       \end{cases}
     \]
 \end{itemize}
 $e$ is computable, and on initial segments of $\alpha_1\beta_1\alpha_2\beta_2\dots$, the martingale $e$ is unbounded. 
 $(\alpha,\beta)$ is not CR and the theorem is proven.
\end{proof}

\subsubsection*{Acknowledgments}
I am grateful to Jason Rute for bringing this question to my attention and for useful discussion.
I am grateful to the Heidelberg university for financial support through the
``Focus Semester on Algorithmic Randomness'' in June 2015.  I thank the organizers,
Wolfgang Merkle, Klaus Ambos-Spies, Nadine Losert, Martin Monath and the participants for creating
a nice work atmosphere.

\bibliography{kolmogorov}

\end{document}